\theoremstyle{plain}		
	\newtheorem{mytheo}{Theorem} [section]
	\newtheorem{mycoro}[mytheo]{Corollary}
     \newtheorem{mylemma}[mytheo]{Lemma}
	\newtheorem{mydefi}[mytheo]{Definition}
\theoremstyle{remark}
\numberwithin{equation}{section}
\numberwithin{figure}{section}
\newcommand{\mz}{\ensuremath{\mathbb Z}}
\newcommand{\mh}{\ensuremath{\mathbb H}}
\newcommand{\mc}{\ensuremath{\mathbb C}}
\newcommand{\mymod}{\ensuremath{\negthickspace \negmedspace \pmod}}
\newcommand{\shortmod}{\ensuremath{\negthickspace \negthickspace \negthickspace \pmod}}
\DeclareMathOperator{\Hom}{Hom}
\begin{document}

\title{Dedekind sums arising from newform Eisenstein series}

\author{Tristie Stucker}
\email{stuc7464@vandals.uidaho.edu}
\address{University of Idaho \\ Moscow, ID 83844}

\author{Amy Vennos} 
\email{avennos3@gulls.salisbury.edu}
\address{Salisbury University \\
Salisbury, MD 21801}

\author{Matthew P. Young} 
\email{myoung@math.tamu.edu}
\address{Department of Mathematics\\
	  Texas A\&M University\\
	  College Station\\
	  TX 77843-3368\\
		U.S.A.}


 \thanks{
 This work was conducted in summer 2018 during an REU conducted at Texas A\&M University. The authors thank the Department of Mathematics at Texas A\&M and the NSF for supporting the REU.
In addition, this material is based upon work supported by the National Science Foundation under agreement No.\ DMS-170222 (M.Y.).  Any opinions, findings and conclusions or recommendations expressed in this material are those of the authors and do not necessarily reflect the views of the National Science Foundation.
 }

\begin{abstract}
For primitive non-trivial Dirichlet characters $\chi_1$ and $\chi_2$, we study the weight zero newform Eisenstein series $E_{\chi_1,\chi_2}(z,s)$ at $s=1$.  The holomorphic part of this function has a transformation rule that we express in finite terms as a generalized Dedekind sum.
This gives rise to the explicit construction (in finite terms) of elements of $H^1(\Gamma_0(N), \mc)$.
We also give a short proof of the reciprocity formula for this Dedekind sum.
\end{abstract}

\maketitle

\section{Introduction}
\subsection{Background and statement of result}
Let $\chi_1,\chi_2$ be primitive Dirichlet characters modulo $q_1,q_2$, respectively, with $\chi_1 \chi_2(-1)= 1$. 
The weight zero newform Eisenstein series attached to $\chi_1$ and $\chi_2$ is defined (initially) as
\begin{equation*}
E_{\chi_1,\chi_2}(z,s)=\frac{1}{2}\sum_{(c,d)=1}\frac{(q_2y)^s\chi_1(c)\chi_2(d)}{|cq_2z+d|^{2s}}, \qquad \text{Re}(s) > 1.
\end{equation*}
Here $E_{\chi_1,\chi_2}$ is an automorphic form on the congruence subgroup $\Gamma_0(q_1q_2)$ with central character $\psi = \chi_1 \overline{\chi_2}$. Precisely, for all $\gamma = 
\left( \begin{smallmatrix}
a& b\\
c& d
\end{smallmatrix} \right) \in \Gamma_0(q_1q_2)$,  
\begin{equation}
\label{eq:Echi1chi2transformation}
E_{\chi_1,\chi_2}(\gamma z,s)=\psi(\gamma)E_{\chi_1,\chi_2}(z,s),
\end{equation}
where $\psi(\gamma)=\psi(d)$. Moreover, $E_{\chi_1, \chi_2}$ is an eigenfunction of all the Hecke operators (see \eqref{eq:Echi1chi2HeckeOperator} below), which indicates why it is called a newform.  We refer to \cite{young} for the properties of the newform Eisenstein series used in this paper.

The classical Kronecker limit formula relates the constant term in the Laurent expansion of $E_{1,1}(z,s)$ at $s=1$ to $\log{\eta}$, where $\eta$ is the Dedekind $\eta$-function  
given by 
\begin{equation*}
\eta(z)=e^{\pi iz/12}\prod_{n=1}^{\infty}(1-e^{2\pi inz}) .
\end{equation*}
For 
$\gamma = 
\left( \begin{smallmatrix}
a& b\\
c& d
\end{smallmatrix} \right) \in \Gamma_0(1)$, 
with $c >0$,
$\log\eta$ obeys the transformation formula
\begin{equation*}\log{\eta(\gamma z)}= \log{ \eta(z)} +\pi i\Big(\frac{a+d}{12c}+s(-d,c)\Big)+\tfrac{1}{2}\log(-i(cz+d)),
\end{equation*}
where $s(h,k)$ is the classical Dedekind sum given by
\begin{equation*}
s(h,k)=\sum_{r=1}^{k-1}\frac{r}{k}\Big(\frac{hr}{k}-\Bigl\lfloor\frac{hr}{k}\Bigr\rfloor-\frac{1}{2}\Big). 
\end{equation*}
See \cite{apostol} for more background on the $\eta$-function and the classical Dedekind sums.


Consider the ``completed" Eisenstein series defined by
\begin{equation}
\label{eq:completedEisensteinSeriesDefinition}
 E_{\chi_1,\chi_2}^*(z,s)=\frac{(q_2/\pi)^s}{\tau(\chi_2)}\Gamma(s)L(2s,\chi_1\chi_2)E_{\chi_1,\chi_2}(z,s).
\end{equation}
Here $\tau$ denotes the Gauss sum given by 
$\tau(\chi)=\sum_{n \mymod{q}} \chi(n) e_q(n)$,
where $e_q(n) = e(n/q)$, $e(x) = \exp(2 \pi i x)$, and $\chi$ is a Dirichlet character modulo $q$.
The Fourier expansion for $E^*_{\chi_1,\chi_2}$ is conveniently stated in \cite{young} (see also \cite{huxley}). When $q_1,q_2 \neq 1$, the Fourier expansion simplifies as
\begin{equation} 
\label{fourier_expansion}
E_{\chi_1, \chi_2}^*(z,s) = 2\sqrt{y}\sum_{n \neq 0}\lambda_{\chi_1, \chi_2} (n,s) e(nx) K_{s-\frac{1}{2}} ( 2 \pi |n| y),
\end{equation}
 where $K_{\nu}$ is the $K$-Bessel function and
\begin{equation}
\label{eq:Echi1chi2FourierCoefficient}
 \lambda_{\chi_1, \chi_2} (n,s) = \chi_2 (\textnormal{sgn} (n) ) \sum_{ab=|n|}\chi_1 (a) \overline{\chi_2} (b) \left( \frac{b}{a} \right)^{s-\frac{1}{2}}.
\end{equation}
The Fourier expansion gives the analytic continuation of $E_{\chi_1, \chi_2}^*(z,s)$ to $s \in \mc$.  In particular, there is no pole at $s=1$, and \eqref{fourier_expansion} specializes as 
\begin{equation}\label{kronecker}
E^*_{\chi_1,\chi_2}(z,1)= f_{\chi_1,\chi_2}(z) + \chi_2 (-1)\overline{f}_{\overline{\chi_1}, \overline{\chi_2}} (z),
\end{equation} 
where 
\begin{equation}
\label{eq:fchi1chi2FourierExpansion}
 f_{\chi_1,\chi_2}(z)=\sum_{n =1}^{\infty} \frac{\lambda_{\chi_1, \chi_2}(n,1)}{\sqrt{n}}  e(nz),
\end{equation}
using $K_{1/2}(2 \pi y) = 2^{-1} y^{-1/2} \exp(-2 \pi y)$.
Because $E^*_{\chi_1,\chi_2}(z,s)$ has no pole at $s=1$, (\ref{kronecker}) is the analogue of the Kronecker limit formula and the function $f_{\chi_1,\chi_2}$ is the analogue of $\log\eta$. 

Define 
\begin{equation*}
\phi_{\chi_1,\chi_2}(\gamma, z)
=\phi_{\chi_1,\chi_2}(\gamma) 
= f_{\chi_1,\chi_2}(\gamma z)-\psi(\gamma)f_{\chi_1,\chi_2}(z),
\end{equation*}
for $\gamma \in \Gamma_0(q_1q_2)$ and $z \in \mh$;
in Lemma \ref{phiind} below, we show that $\phi_{\chi_1,\chi_2}$ is independent of $z$.

\begin{mydefi}
Let $\chi_1, \chi_2$ be primitive Dirichlet characters of conductors $q_1, q_2$, respectively, with $q_1, q_2 > 1$, and $\chi_1 \chi_2(-1) = 1$.  For $\gamma \in \Gamma_0(q_1 q_2)$, define the \emph{Dedekind sum} $S_{\chi_1, \chi_2}$ associated to the newform Eisenstein series $E_{\chi_1, \chi_2}$ by
\begin{equation}
\label{eq:DedekinSumDefinition}
 S_{\chi_1, \chi_2}(\gamma) = \frac{\tau (\overline{\chi_1})}{\pi i} \phi_{\chi_1, \chi_2}(\gamma).
\end{equation}
\end{mydefi}
Let $B_1$ denote the first Bernoulli function given by 
\begin{equation*}
B_1 (x) = \begin{cases} 
      x - \lfloor x \rfloor - \frac{1}{2} & \textnormal{if} \  x \in \mathbb{R} \backslash \mathbb{Z} \\
      0 & \textnormal{if} \ x \in \mathbb{Z}.
   \end{cases}
   \end{equation*}
The first main result in this paper is an evaluation of $S_{\chi_1, \chi_2}$ in finite terms: 
\begin{mytheo} 
\label{theorem}
Let $\chi_1, \chi_2$ be primitive Dirichlet characters of conductors $q_1, q_2$, respectively, with $q_1, q_2 > 1$, and $\chi_1 \chi_2(-1) = 1$.
Let $\gamma = 
\left( \begin{smallmatrix}
a& b\\
c& d
\end{smallmatrix} \right)
\in \Gamma_0(q_1q_2)$. For $c \geq 1$, then
\begin{equation}
\label{eq:phig1g2FiniteSum}
S_{\chi_1, \chi_2} (\gamma) =  \sum_{j \shortmod{c}} \sum_{n \shortmod{q_1}}\overline{\chi_2} (j)  \overline{\chi_1} (n) B_1\Big(\frac{j}{c}\Big) B_1 \left( \frac{n}{q_1}+\frac{aj}{c} \right). 
\end{equation}
\end{mytheo}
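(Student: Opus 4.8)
The plan is to trade the automorphy defect of $f_{\chi_1,\chi_2}$ for the boundary values of the elementary series
\begin{equation*}
h(w)=\sum_{m\ge1}\frac{e(mw)}{m}=-\log\big(1-e(w)\big),\qquad w\in\mathbb{H},
\end{equation*}
whose restriction to the real axis carries the sawtooth: $\imag\,h(x+i0^+)=\sum_{m\ge1}\frac{\sin2\pi mx}{m}=-\pi B_1(x)$ for $x\notin\mathbb{Z}$, so that $h(x+i0^+)=-\log(2|\sin\pi x|)-\pi i\,B_1(x)$. First I would rewrite $f$: setting $s=1$ and $N=m\ell$ in \eqref{eq:Echi1chi2FourierCoefficient} gives $\lambda_{\chi_1,\chi_2}(N,1)/\sqrt N=\sum_{m\ell=N}\chi_1(m)\overline{\chi_2}(\ell)/m$, so that by \eqref{eq:fchi1chi2FourierExpansion} one has $f_{\chi_1,\chi_2}(z)=\sum_{m,\ell\ge1}\frac{\chi_1(m)\overline{\chi_2}(\ell)}{m}e(m\ell z)$. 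Extracting $\chi_1$ through its Gauss sum, $\chi_1(m)=\tau(\overline{\chi_1})^{-1}\sum_{n\bmod q_1}\overline{\chi_1}(n)e(mn/q_1)$ (valid as $\chi_1$ is primitive), produces the representation
\begin{equation*}
\tau(\overline{\chi_1})\,f_{\chi_1,\chi_2}(z)=\sum_{n\bmod q_1}\overline{\chi_1}(n)\sum_{\ell\ge1}\overline{\chi_2}(\ell)\,h\Big(\ell z+\tfrac{n}{q_1}\Big),
\end{equation*}
which already displays the factor $\tau(\overline{\chi_1})$ and the shifts $n/q_1$ appearing in \eqref{eq:phig1g2FiniteSum}.

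By Lemma \ref{phiind}, $\phi_{\chi_1,\chi_2}(\gamma)$ is independent of $z$, so I would send $z=iy\to i\infty$; then $f_{\chi_1,\chi_2}(z)\to0$, the term $\psi(\gamma)f_{\chi_1,\chi_2}(z)$ drops out of \eqref{eq:DedekinSumDefinition}, and
\begin{equation*}
\pi i\,S_{\chi_1,\chi_2}(\gamma)=\lim_{y\to\infty}\sum_{n\bmod q_1}\overline{\chi_1}(n)\sum_{\ell\ge1}\overline{\chi_2}(\ell)\,h\Big(\ell\,\gamma z+\tfrac{n}{q_1}\Big).
\end{equation*}
Since $\gamma z\to a/c$ with $\imag(\gamma z)\to0^+$, the argument $\ell\gamma z+n/q_1$ tends to the real point $a\ell/c+n/q_1$ from inside $\mathbb{H}$, and because $q_2\mid c$ and $\gcd(a,c)=1$, the limiting summand $\overline{\chi_2}(\ell)\,h(a\ell/c+n/q_1+i0^+)$ is periodic in $\ell$ with period $c$.

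The inner $\ell$-sum is thus an Abel-type limit of a periodic sequence $r(\ell)$, and this is the crux. Writing $r=\bar r+(r-\bar r)$, the mean $\bar r=\sum_{j\bmod c}\overline{\chi_2}(j)h(aj/c+n/q_1+i0^+)$ contributes a piece that diverges as $\imag(\gamma z)\to0^+$; the decisive point is that a short Gauss-sum computation (using $c/(q_1q_2)\in\mathbb{Z}$) gives $\bar r=\chi_2(a)\tau(\overline{\chi_2})L(1,\chi_2)$, \emph{independent of $n$}, so this divergence, together with all other $n$-independent artefacts, is annihilated once the outer sum invokes $\sum_{n\bmod q_1}\overline{\chi_1}(n)=0$ — this is where the hypothesis $q_1>1$ enters. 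The surviving mean-zero part is evaluated by the elementary Abel formula, which converts the sum over $\ell\ge1$ of a mean-zero period-$c$ sequence into $-\sum_{j\bmod c}B_1(j/c)(\cdots)$; this manufactures the second Bernoulli factor $B_1(j/c)$ and collapses the sum to $j\bmod c$. Hence, after the $n$-independent pieces are removed,
\begin{equation*}
\pi i\,S_{\chi_1,\chi_2}(\gamma)=-\sum_{n\bmod q_1}\overline{\chi_1}(n)\sum_{j\bmod c}\overline{\chi_2}(j)\,B_1\Big(\tfrac{j}{c}\Big)\,h\Big(\tfrac{aj}{c}+\tfrac{n}{q_1}+i0^+\Big).
\end{equation*}

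Substituting $h(x+i0^+)=-\log(2|\sin\pi x|)-\pi i\,B_1(x)$ splits the right-hand side into a sawtooth part, which is exactly $\pi i\sum_{n,j}\overline{\chi_1}(n)\overline{\chi_2}(j)B_1(j/c)B_1(n/q_1+aj/c)$ and yields \eqref{eq:phig1g2FiniteSum} after dividing by $\pi i$, and a logarithmic part $\sum_{n,j}\overline{\chi_1}(n)\overline{\chi_2}(j)B_1(j/c)\log(2|\sin\pi(aj/c+n/q_1)|)$. I would dispose of the latter by the substitution $(j,n)\mapsto(-j,-n)$: it fixes the characters since $\overline{\chi_1}(-1)\overline{\chi_2}(-1)=\chi_1\chi_2(-1)=1$, fixes the logarithm, and sends $B_1(j/c)\mapsto-B_1(j/c)$, so the contribution is odd and cancels — here the parity hypothesis $\chi_1\chi_2(-1)=1$ is used. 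The main obstacle is the analytic justification of the $y\to\infty$ limit: the inner sums are only conditionally convergent on the boundary, the $n$-fixed limits genuinely diverge, and one must prove both that the divergence is $n$-independent (so that orthogonality removes it) and that the Abel regularization producing $B_1(j/c)$ is legitimate. This is cleanest if one retains a regularizing parameter — for instance working with $E^*_{\chi_1,\chi_2}(z,s)$ for $\real(s)$ slightly larger than $1$, where \eqref{fourier_expansion} converges absolutely, and letting $s\to1$ only at the end; everything else reduces to bookkeeping with Gauss sums.
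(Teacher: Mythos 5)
Your route is sound and genuinely different in its organization from the paper's, and the finite computations in it check out. The paper keeps the Gauss sum for the end: it writes $f_{\chi_1,\chi_2}(z)=\sum_l \frac{\chi_1(l)}{l}\theta_{\chi_2}(z,l)$, first collapses the $\chi_2$-variable at the cusp (Lemma \ref{kappa}, Corollary \ref{corollary}) to produce the factor $B_1(j/c)$, and only then recognizes the remaining $l$-sum as the Fourier expansion \eqref{eq:B1chidef} of $B_{1,\chi_1}$, which is where $\tau(\overline{\chi_1})$ enters; Berndt's identity \eqref{eq:B1chidecomposition} then yields the double sum. You instead open with the Gauss sum, collapse the $\chi_1$-variable first into $h(w)=-\log(1-e(w))$, and regularize the $\chi_2$-variable second. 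Your parity cancellation of the $\log|2\sin|$ terms under $(j,n)\mapsto(-j,-n)$ plays exactly the role of the paper's symmetrization \eqref{avg} (both are the only places $\chi_1\chi_2(-1)=1$ is used), and your observation that the period-mean $\chi_2(a)\tau(\overline{\chi_2})L(1,\chi_2)$ is independent of $n$ and is killed by $\sum_{n}\overline{\chi_1}(n)=0$ is correct and is the analogue of the paper's use of Lemma \ref{simplifying} (where the vanishing of the mean is what makes Corollary \ref{corollary} clean). What your arrangement buys is that the boundary value $h(x+i0^{+})=-\log(2|\sin\pi x|)-\pi i B_1(x)$ makes the appearance of both Bernoulli factors completely explicit without citing Berndt; what it costs is that you must carry a divergent, $n$-dependent-looking piece through the limit.

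That cost is the one genuine gap. In your setup the inner series $\sum_{\ell\ge1}\overline{\chi_2}(\ell)\,h(\ell\gamma z+n/q_1)$ is not an Abel sum $\sum g(\ell)t^{\ell}$: the regularizing parameter sits inside $h$ as an imaginary part $\ell\,\mathrm{Im}(\gamma z)$ that grows with $\ell$, so the passage from this series to ``mean part plus $-\sum_j B_1(j/c)(\cdots)$'' is precisely the statement that needs proof, and for each fixed $n$ the limit genuinely diverges, so the divergence must be tracked uniformly before the $n$-sum can cancel it. Your proposed fix --- working with $E^{*}_{\chi_1,\chi_2}(z,s)$ for $\mathrm{Re}(s)>1$ and letting $s\to1$ at the end --- does not go through as stated, because the splitting \eqref{kronecker} into a holomorphic piece $f_{\chi_1,\chi_2}$ and an antiholomorphic piece exists only at $s=1$ (for $s\ne 1$ the expansion \eqref{fourier_expansion} involves genuine $K$-Bessel functions), so there is no function ``$f_{\chi_1,\chi_2}(z,s)$'' to deform. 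The cleaner repair is the one the paper uses in Lemma \ref{limit}: keep $u=\mathrm{Im}(\gamma z)>0$, note that the partial sums of $\overline{\chi_2}(\ell)$ times the relevant additive character vanish over a full period (Lemma \ref{simplifying}), and use partial summation to get convergence that is uniform in $u$; this justifies interchanging $\lim_{u\to0^{+}}$ with the outer sum and simultaneously disposes of the divergent mean. Adapting that argument to your arrangement requires one extra check, namely that the logarithmic singularities of $h$ at the points $a\ell/c+n/q_1\in\mz$ never occur for $\ell$ with $\overline{\chi_2}(\ell)\neq0$ (they force $q_2\mid\ell$, so they are harmless since $q_2>1$), after which your proof closes.
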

Our second main result gives a simple proof of the following reciprocity formula:
\begin{mytheo}
\label{thm:reciprocity}
For $\gamma = (\begin{smallmatrix} a & b \\ cq_1 q_2 & d \end{smallmatrix}) \in \Gamma_0(q_1 q_2)$, let $\gamma' = (\begin{smallmatrix} d & -c \\ -bq_1 q_2 & a \end{smallmatrix}) \in \Gamma_0(q_1 q_2)$.  If $\chi_1$ and $\chi_2$ are even, then
\begin{equation}
S_{\chi_1, \chi_2}(\gamma) = S_{\chi_2, \chi_1}(\gamma').
\end{equation}
If $\chi_1$ and $\chi_2$ are odd, then
\begin{equation}
S_{\chi_1, \chi_2}(\gamma) = -S_{\chi_2, \chi_1}(\gamma') + (1 - \psi(\gamma)) \frac{\tau(\overline{\chi_1}) \tau(\overline{\chi_2})}{(\pi i)^2} L(1, \chi_1) L(1, \chi_2).
\end{equation}
\end{mytheo}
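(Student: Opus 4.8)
The plan is to exploit the Fricke involution $W_N = \left(\begin{smallmatrix} 0 & -1 \\ N & 0\end{smallmatrix}\right)$, where $N = q_1 q_2$, acting by $W_N z = -1/(Nz)$. A direct matrix computation gives $\gamma' = W_N \gamma W_N^{-1}$, so $\gamma'$ is exactly the $W_N$-conjugate of $\gamma$; moreover, since $ad \equiv 1 \pmod N$ one checks that $\psi'(\gamma') = \psi(\gamma)$, where $\psi' = \chi_2 \overline{\chi_1}$ is the central character of $E_{\chi_2,\chi_1}$. Because $W_N$ normalizes $\Gamma_0(N)$ and $E^*_{\chi_1,\chi_2}$ is a weight-zero newform, I would use the Fricke transformation law
\begin{equation*}
E^*_{\chi_1,\chi_2}(W_N z, 1) = \epsilon\, E^*_{\chi_2,\chi_1}(z,1)
\end{equation*}
for an explicit constant $\epsilon$; this is part of the package of properties in \cite{young} and can be verified directly from the Fourier coefficients \eqref{eq:Echi1chi2FourierCoefficient} (after $a \leftrightarrow b$ and $K_\nu = K_{-\nu}$, one sees the companion relation $E^*_{\chi_1,\chi_2}(z,1-s) = E^*_{\overline{\chi_2},\overline{\chi_1}}(z,s)$).

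Next I would separate holomorphic and anti-holomorphic parts. Since $z \mapsto W_N z$ is holomorphic, substituting \eqref{kronecker} into the Fricke law and using that a function which is simultaneously holomorphic and anti-holomorphic is constant yields
\begin{equation}
\label{eq:fFricke}
f_{\chi_1,\chi_2}(W_N z) = \epsilon\, f_{\chi_2,\chi_1}(z) + c,
\end{equation}
where $c$ is the holomorphic part of the constant term of $E^*_{\chi_1,\chi_2}$ at the cusp $0$. Feeding \eqref{eq:fFricke} into $\phi_{\chi_1,\chi_2}(\gamma) = f_{\chi_1,\chi_2}(\gamma z) - \psi(\gamma) f_{\chi_1,\chi_2}(z)$, writing $\gamma z = W_N \gamma' W_N z$, and applying the automorphy of $f_{\chi_2,\chi_1}$ under $\gamma' \in \Gamma_0(N)$ together with $\psi'(\gamma') = \psi(\gamma)$, the $z$-dependence cancels (this also reproves independence of $z$) and I obtain the cocycle identity
\begin{equation}
\label{eq:phiCocycle}
\phi_{\chi_1,\chi_2}(\gamma) = \epsilon\, \phi_{\chi_2,\chi_1}(\gamma') + c\,(1 - \psi(\gamma)).
\end{equation}
Multiplying by $\tau(\overline{\chi_1})/(\pi i)$ and writing $\phi_{\chi_2,\chi_1}(\gamma') = (\pi i/\tau(\overline{\chi_2}))\, S_{\chi_2,\chi_1}(\gamma')$ via \eqref{eq:DedekinSumDefinition} converts \eqref{eq:phiCocycle} into a relation between $S_{\chi_1,\chi_2}(\gamma)$ and $S_{\chi_2,\chi_1}(\gamma')$ with leading coefficient $\epsilon\, \tau(\overline{\chi_1})/\tau(\overline{\chi_2})$ and additive term $(\tau(\overline{\chi_1})\, c/(\pi i))(1-\psi(\gamma))$.

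It then remains to evaluate the two constants. For the leading coefficient I would show $\epsilon\, \tau(\overline{\chi_1})/\tau(\overline{\chi_2}) = \chi_1(-1)$, which produces the sign $+1$ in the even case and $-1$ in the odd case (recall $\chi_1(-1) = \chi_2(-1)$ since $\chi_1\chi_2(-1)=1$); this amounts to pinning down the Fricke root number $\epsilon$ through the normalizing Gauss sums. For the additive term, I would compute $c$, the holomorphic constant term of $E^*_{\chi_1,\chi_2}$ at $0$: using $\lambda_{\chi_1,\chi_2}(n,1)/\sqrt{n} = \sum_{ab=n} (\chi_1(a)/a)\,\overline{\chi_2}(b)$ to rewrite $f_{\chi_1,\chi_2}(z) = \sum_{a,b \geq 1}(\chi_1(a)/a)\,\overline{\chi_2}(b)\, e(abz)$ and extracting $\int_0^1 f_{\chi_1,\chi_2}(W_N(x+iy))\,dx$, the constant term should vanish when $\chi_1,\chi_2$ are even and equal $\tau(\overline{\chi_2})\, L(1,\chi_1) L(1,\chi_2)/(\pi i)$ when they are odd. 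Substituting these values into the $S$-form of \eqref{eq:phiCocycle} yields exactly the two claimed formulas.

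I expect the evaluation of $c$ to be the main obstacle: it requires a careful constant-term computation at the cusp $0$ (a Poisson or Mellin argument on the inner $b$-sum), and it is precisely here that the product $L(1,\chi_1)L(1,\chi_2)$ appears in the odd case and disappears in the even case. A cleaner alternative, if one invokes Theorem \ref{theorem}, is to determine $c$ by evaluating \eqref{eq:phiCocycle} at a single $\gamma$ with $\psi(\gamma) \neq 1$ and reducing the resulting finite Bernoulli sums to $L$-values through the classical evaluation of $L(1,\chi)$ for odd $\chi$.
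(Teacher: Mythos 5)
Your proposal follows essentially the same route as the paper: conjugation by the Fricke involution ($\omega\gamma = \gamma'\omega$), the transformation $E^*_{\chi_1,\chi_2}(\omega z,1) = \delta_{\chi_1,\chi_2}E^*_{\chi_2,\chi_1}(z,1)$, holomorphic/antiholomorphic separation to get $f_{\chi_1,\chi_2}(\omega z) = \delta_{\chi_1,\chi_2}f_{\chi_2,\chi_1}(z) + \phi_{\chi_1,\chi_2}(\omega)$, the resulting cocycle identity, and the evaluations $\delta_{\chi_1,\chi_2}\tau(\overline{\chi_1})/\tau(\overline{\chi_2}) = \chi_1(-1)$ and $\phi_{\chi_1,\chi_2}(\omega) = \tau(\overline{\chi_2})L(1,\chi_1)L(1,\chi_2)/(\pi i)$ (odd case), all of which you predict correctly. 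The constant-term computation you flag as the main obstacle is handled in the paper by the same $\theta_{\chi}$ limit machinery already set up for Theorem \ref{theorem} (one gets $\phi_{\chi_1,\chi_2}(\omega) = \lim_{u\to 0^+}f_{\chi_1,\chi_2}(iu) = -L(1,\chi_1)B_{1,\chi_2}(0)$), so it requires no new ideas.
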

The main step in the proof of Theorem \ref{thm:reciprocity} is to study the action of the Fricke involution $\omega_{q_1 q_2} = (\begin{smallmatrix} 0 & -1 \\ q_1 q_1 & 0 \end{smallmatrix})$.  Since $E_{\chi_1, \chi_2}$ is a pseudo-eigenvector of all the Atkin-Lehner operators (see \cite[Section 9]{young}), it seems plausible that an adaptation of the proof can give a family of reciprocity formulas, one for each Atkin-Lehner operator.

Many authors have investigated generalized Dedekind sums arising from various types of Eisenstein series.  Goldstein \cite{goldstein} studies the Eisenstein series attached to cusps for the principal congruence subgroup $\Gamma(N)$.
Nagasaka \cite{nagasaka} and Goldstein and Razar \cite{goldsteinrazar} investigate  functions essentially equivalent, in our notation, to $f_{\chi, \chi}$; they derive  the  transformation properties of $f_{\chi,\chi}$ (including the reciprocity formula) by relation  to the Mellin transform of the product of Dirichlet $L$-functions instead of via properties of Eisenstein series.  

The generalized Dedekind sums attached to pairs of Dirichlet characters have appeared in the literature in connection with certain Eisenstein-type series.  
Berndt \cite[Section 6]{berndt1975} defines generalized Dedekind sums which essentially correspond to the right hand side of 
\eqref{eq:phig1g2FiniteSum} when $q_1 = 1$ or $q_2 = 1$.  Berndt derives properties of his Dedekind sums using 
a different variant of Eisenstein series than what is used in this paper; Berndt's Eisenstein-type series have more complicated
transformation properties than $E_{\chi_1, \chi_2}$ (compare \cite[Theorem 2]{berndt1973} to \eqref{eq:Echi1chi2transformation}).
Many authors have studied generalized Dedekind sums, such as \cite{meyer} \cite{sekine} \cite{CCK} \cite{turkish}, based ultimately on Berndt's transformation formulas. 

Reciprocity formulas for
variants of $S_{\chi_1, \chi_2}$, with general pairs of characters $\chi_1, \chi_2$ have appeared in
\cite{turkish}.  However, it appears that Theorem \ref{thm:reciprocity} is new (e.g. \cite[Theorem 1]{turkish} excludes the case $p=1$ which would correspond to Theorem \ref{thm:reciprocity}).

In Section \ref{section:EichlerShimura} we connect $S_{\chi_1, \chi_2}$ to the Eisenstein component of the Eichler-Shimura isomorphism in weight $2$.

\subsection{Acknowledgements} The third author thanks Riad Masri and Ian Petrow for thoughtful comments.

\section{Basic properties of $S_{\chi_1,\chi_2}$}
\begin{mylemma}\label{phiind}
The function $\phi_{\chi_1,\chi_2}$ is independent of $z$.
\end{mylemma}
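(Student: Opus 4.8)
The plan is to leverage the automorphy of the completed Eisenstein series together with the holomorphic/anti-holomorphic splitting recorded in the Kronecker limit formula \eqref{kronecker}. First I would note that the completing factor in \eqref{eq:completedEisensteinSeriesDefinition} depends only on $s$, not on $z$, so the transformation rule \eqref{eq:Echi1chi2transformation} passes verbatim to the completed series: for $\gamma \in \Gamma_0(q_1 q_2)$ and at the point $s=1$ one has $E^*_{\chi_1,\chi_2}(\gamma z, 1) = \psi(\gamma) E^*_{\chi_1,\chi_2}(z,1)$.

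Next I would substitute the decomposition \eqref{kronecker} into both sides of this identity, evaluating the left side at $\gamma z$ and the right side at $z$, and then collect the $f_{\chi_1,\chi_2}$ terms. This rearranges to
\begin{equation*}
\phi_{\chi_1,\chi_2}(\gamma, z) = -\chi_2(-1)\Big[\overline{f}_{\overline{\chi_1},\overline{\chi_2}}(\gamma z) - \psi(\gamma)\,\overline{f}_{\overline{\chi_1},\overline{\chi_2}}(z)\Big],
\end{equation*}
where the left-hand side is exactly the defining expression for $\phi_{\chi_1,\chi_2}(\gamma,z)$.

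The key observation is then that the two sides have opposite analytic types. The left-hand side is holomorphic in $z$: the function $f_{\chi_1,\chi_2}$ is holomorphic by its $q$-expansion \eqref{eq:fchi1chi2FourierExpansion}, and $z \mapsto \gamma z$ is a holomorphic self-map of $\mh$, so both $f_{\chi_1,\chi_2}(\gamma z)$ and $f_{\chi_1,\chi_2}(z)$ are holomorphic. The right-hand side is anti-holomorphic in $z$, being a fixed constant multiple of the complex conjugate of a holomorphic function precomposed with the holomorphic map $\gamma z$. Applying $\partial/\partial z$ therefore annihilates the right-hand side, so $\partial \phi_{\chi_1,\chi_2}(\gamma,z)/\partial z = 0$; since $\phi_{\chi_1,\chi_2}(\gamma,\cdot)$ is already holomorphic, its $\overline{z}$-derivative vanishes as well. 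Both Wirtinger derivatives vanishing on the connected domain $\mh$ forces $\phi_{\chi_1,\chi_2}(\gamma,z)$ to be constant in $z$, which is the claim.

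The argument is short, and the only place requiring care is the bookkeeping of analytic types: one must verify that precomposition with the holomorphic automorphism $\gamma z$ preserves holomorphicity on the left and anti-holomorphicity on the right, so that equating a holomorphic function with an anti-holomorphic one genuinely forces constancy. I do not expect any obstacle beyond this verification.
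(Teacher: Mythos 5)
Your proposal is correct and follows essentially the same route as the paper: both pass the automorphy of $E^*_{\chi_1,\chi_2}(\cdot,1)$ through the decomposition \eqref{kronecker} to equate $\phi_{\chi_1,\chi_2}(\gamma,z)$ with $-\chi_2(-1)\overline{\phi}_{\overline{\chi_1},\overline{\chi_2}}(\gamma,z)$, and then conclude constancy because a function that is simultaneously holomorphic and anti-holomorphic on $\mh$ is constant. Your extra care with the Wirtinger derivatives is just a more explicit rendering of the paper's one-line conclusion.
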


\begin{proof}
Since $E_{\chi_1, \chi_2}^* ( \gamma z,1) = \psi (\gamma) E_{\chi_1, \chi_2}^* (z,1) $ and $E^*_{\chi_1,\chi_2}(z,1)= f_{\chi_1,\chi_2}(z) + \chi_2 (-1)
\overline{f}_{\overline{\chi_1}, \overline{\chi_2}} (z)$, it immediately follows that
\begin{equation} \label{holoantiholo}
\phi_{\chi_1, \chi_2} (\gamma , z) = - \chi_2(-1)\overline{\phi}_{\overline{\chi_1},\overline{\chi_2}}(\gamma,z).
\end{equation}
Since $\phi_{\chi_1, \chi_2}$ is holomorphic and $\overline{\phi}_{\overline{\chi_1},\overline{\chi_2}}$ is antiholomorphic, $\phi_{\chi_1, \chi_2}$ must be constant in $z$.
\end{proof}

For later reference, we point out a symmetrized form for $\phi_{\chi_1, \chi_2}$ following from \eqref{holoantiholo}:
\begin{equation}\label{avg}
\phi_{\chi_1,\chi_2}(\gamma)=\tfrac{1}{2}(\phi_{\chi_1,\chi_2}(\gamma) -\chi_2(-1)\overline{\phi}_{\overline{\chi_1},\overline{\chi_2}}(\gamma)).
\end{equation}

\begin{mylemma}
\label{lemma:crossedhomomorphism}
Let $\gamma_1,\gamma_2 \in \Gamma_0(q_1q_2)$. Then 
\begin{equation}
S_{\chi_1,\chi_2}(\gamma_1\gamma_2)=S_{\chi_1,\chi_2}(\gamma_1)+\psi(\gamma_1)S_{\chi_1,\chi_2}(\gamma_2).
\end{equation}
\end{mylemma}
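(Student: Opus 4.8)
The plan is to reduce everything to the function $\phi_{\chi_1,\chi_2}$. By the definition \eqref{eq:DedekinSumDefinition} we have $S_{\chi_1,\chi_2}(\gamma) = \frac{\tau(\overline{\chi_1})}{\pi i}\phi_{\chi_1,\chi_2}(\gamma)$, so the claimed identity is equivalent, after multiplying through by the common constant $\tau(\overline{\chi_1})/(\pi i)$, to the cocycle relation
\begin{equation*}
\phi_{\chi_1,\chi_2}(\gamma_1\gamma_2)=\phi_{\chi_1,\chi_2}(\gamma_1)+\psi(\gamma_1)\phi_{\chi_1,\chi_2}(\gamma_2).
\end{equation*}
Thus I would prove this relation for $\phi_{\chi_1,\chi_2}$ and then scale.

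First I would record that $\psi$ is a genuine homomorphism on $\Gamma_0(q_1q_2)$. Writing $\gamma_i = \left(\begin{smallmatrix} a_i & b_i \\ c_i & d_i \end{smallmatrix}\right)$, the lower-right entry of the product $\gamma_1\gamma_2$ is $c_1 b_2 + d_1 d_2$, and since $q_1 q_2 \mid c_1$ this is $\equiv d_1 d_2 \pmod{q_1 q_2}$. As $\psi = \chi_1\overline{\chi_2}$ is a Dirichlet character whose modulus divides $q_1 q_2$, this gives $\psi(\gamma_1\gamma_2)=\psi(\gamma_1)\psi(\gamma_2)$. This multiplicativity is the only place the congruence hypothesis on $\Gamma_0(q_1 q_2)$ is used.

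Next comes the core computation. Abbreviating $f = f_{\chi_1,\chi_2}$ and $\phi = \phi_{\chi_1,\chi_2}$, the definition of $\phi$ rearranges to $f(\gamma w) = \phi(\gamma, w) + \psi(\gamma) f(w)$. I would apply this first with $\gamma = \gamma_1$ and $w = \gamma_2 z$, and then expand $f(\gamma_2 z) = \phi(\gamma_2) + \psi(\gamma_2) f(z)$. Invoking Lemma \ref{phiind} to drop the $z$-dependence in $\phi(\gamma_1, \gamma_2 z) = \phi(\gamma_1)$, this yields a telescoping ``chain rule''
\begin{equation*}
f(\gamma_1\gamma_2 z) = \phi(\gamma_1) + \psi(\gamma_1)\phi(\gamma_2) + \psi(\gamma_1)\psi(\gamma_2) f(z).
\end{equation*}
Subtracting $\psi(\gamma_1\gamma_2) f(z)$ from both sides and using the multiplicativity established above, the $f(z)$ terms cancel, leaving exactly $\phi(\gamma_1\gamma_2) = \phi(\gamma_1) + \psi(\gamma_1)\phi(\gamma_2)$.

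There is no substantive obstacle here; the argument is a standard unwinding of a crossed homomorphism. The two ingredients that make it go through are the homomorphism property of $\psi$ and the $z$-independence of $\phi$ from Lemma \ref{phiind}, the latter being what allows $\phi(\gamma_1,\gamma_2 z)$ to be evaluated as the constant $\phi(\gamma_1)$ rather than a function of $\gamma_2 z$.
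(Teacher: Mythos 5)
Your proposal is correct and follows essentially the same route as the paper: reduce to $\phi_{\chi_1,\chi_2}$, use the multiplicativity of $\psi$ together with Lemma \ref{phiind}, and telescope $f(\gamma_1\gamma_2 z)$. The only difference is that you spell out why $\psi$ is a homomorphism on $\Gamma_0(q_1q_2)$, which the paper takes for granted.
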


\noindent\textbf{Remarks.}
It is obvious from the definition that $S_{\chi_1, \chi_2}(\gamma) = 0$ if $\gamma = (\begin{smallmatrix} 1 & n \\ 0 & 1 \end{smallmatrix})$, for $n \in \mz$, and consequently 
$S_{\chi_1, \chi_2}(\gamma)$ only depends on the lower row of $\gamma$ (or, alternatively, the first column of $\gamma$).

Let $G = \Gamma_0(q_1 q_2)$ and $M= \mc$, and consider the action $\gamma.z$ of $G$ on $M$ given by 
$\gamma.z = \psi(\gamma) z$.  Note $G$ acts via automorphisms on $M$ (as a module).
With this notation, Lemma \ref{lemma:crossedhomomorphism} shows that $S_{\chi_1, \chi_2}$ is a $1$-cocycle (or a crossed homomorphism) for this group action of $G$ on $M$.  
Hence, $S_{\chi_1, \chi_2}$ gives rise to an element of $H^1(G,M)$.
In particular, if $\psi$ is trivial then $H^1(G,M) = \Hom(\Gamma_0(q_1 q_2), \mc)$ (i.e.,
$S_{\chi_1, \chi_2}$ is a group homomorphism).  Note also that $\psi$ is trivial on $\Gamma_1(q_1 q_2)$ so $S_{\chi_1, \chi_2}$ may always be viewed as an element of $\Hom(\Gamma_1(q_1 q_2), \mc)$.

\begin{proof}
Since $\psi$ is multiplicative, and by Lemma \ref{phiind},  we have
\begin{equation*}
\phi_{\chi_1,\chi_2}(\gamma_1\gamma_2)
= 
\underbrace{f_{\chi_1,\chi_2}(\gamma_1\gamma_2 z)- \psi(\gamma_1)f_{\chi_1,\chi_2}(\gamma_2 z)}_{\phi_{\chi_1, \chi_2}(\gamma_1)} 
+ \psi(\gamma_1) 
\underbrace{(f_{\chi_1,\chi_2}(\gamma_2 z) -\psi(\gamma_2)f_{\chi_1,\chi_2}(z))}_{\phi_{\chi_1,\chi_2}(\gamma_2)}.
\qedhere
\end{equation*} 
\end{proof}


 Let $T_n$ be the Hecke operator acting on weight $0$ periodic functions, with character $\chi$ (cf. \cite[(6.13)]{IwaniecTopics}), defined by
 \begin{equation*}
  (T_n f)(z) = \frac{1}{\sqrt{n}} \sum_{ad=n} \chi(a) 
  \sum_{b \shortmod{d}}
  f\Big(\frac{az+b}{d}\Big).
 \end{equation*}
It is easy to check that 
\begin{equation}
\label{eq:Echi1chi2HeckeOperator}
T_n E_{\chi_1, \chi_2}^*(z,s) = \lambda_{\chi_1, \chi_2}(n, s) E_{\chi_1, \chi_2}^*(z,s),
\end{equation}
for any $n \geq 1$.  We remark in passing that
\begin{equation}
 T_n f_{\chi_1, \chi_2} = \lambda_{\chi_1, \chi_2}(n,1) f_{\chi_1, \chi_2},
\end{equation}
 which
follows immediately from \eqref{kronecker}  and the fact that the Hecke operators preserve holomorphicity (and anti-holomorphicity).

\section{Proof of Theorem \ref{theorem}}
Our goal for the proof of Theorem \ref{theorem} is to use properties of $f_{\chi_1,\chi_2}$ in order to simplify $\phi_{\chi_1,\chi_2}$ and write it in finite terms. Our process loosely follows the methodology of Goldstein \cite{goldstein}. Let $\gamma = \left(\begin{smallmatrix}
a& b\\
c& d
\end{smallmatrix}\right) \in \Gamma_0(q_1q_2)$, with $c \geq 1$, and let $z= \frac{-d}{c}+\frac{i}{c^2u}$ for some $u > 0$. Then $\gamma z = \frac{a}{c}+iu$, 
and
\begin{equation*}
\phi_{\chi_1, \chi_2}(\gamma)= \lim_{u \rightarrow 0^+} \big[  f_{\chi_1,\chi_2}(\tfrac{a}{c}+iu)-\psi(\gamma)f_{\chi_1,\chi_2}(\tfrac{-d}{c}+\tfrac{i}{c^2u}) \big].
\end{equation*} 
From the Fourier expansion of $E_{\chi_1,\chi_2}^*$, it is clear that
$\lim_{u \rightarrow 0^+} f_{\chi_1,\chi_2}\left(\frac{-d}{c}+\frac{i}{c^2u}\right) = 0$. Thus,
\begin{equation}
\label{philimit}
\phi_{\chi_1, \chi_2}(\gamma)= \lim_{u \rightarrow 0^+} \ f_{\chi_1,\chi_2}(\tfrac{a}{c}+iu).
\end{equation}
This is the ``constant term'' in the Fourier expansion of $f_{\chi_1, \chi_2}$ around the cusp $a/c$.

To evaluate this limit, we begin by writing $f_{\chi_1,\chi_2}$ as
\begin{equation*}
f_{\chi_1,\chi_2}(z) = \sum_{k=1}^{\infty}\sum_{l=1}^{\infty}\frac{\chi_1(l)\overline{\chi_2}(k)}{l}e(klz).
\end{equation*}
Then 
\begin{equation}\label{fwiththeta}
f_{\chi_1,\chi_2}(z) =
\sum_{l=1}^{\infty}\frac{\chi_1(l)}{l} \theta_{\chi_2} (z,l), \quad \textnormal{where} \quad \theta_{\chi}(z,l):=\sum_{k=1}^{\infty}\overline{\chi}(k)e(klz).
\end{equation}

The following lemma will be used in several of the proofs below.
\begin{mylemma}\label{simplifying}
Let $\chi$ be a character of conductor $q$. Let $a,c,l \in \mathbb{Z}$ with $c \geq 1$, $c \equiv 0 \pmod{q}$, $(a,c)=1$, and $l \not\equiv 0 \pmod{\frac{c}{q}}$. Then $$\sum_{j \shortmod{c}} \overline{\chi}(j)
e_c(a lj) = 0.
$$
\end{mylemma}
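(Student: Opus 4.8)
The plan is to exploit the disparity between the two moduli appearing in the sum: $\overline{\chi}(j)$ depends only on $j$ modulo the conductor $q$, whereas the exponential $e_c(alj)$ depends on $j$ modulo the larger modulus $c$. Splitting the residue $j$ according to these two moduli will factor the sum into a character-weighted piece times a complete geometric sum, and the geometric sum will vanish under the hypothesis $l \not\equiv 0 \pmod{c/q}$.

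Concretely, set $m = c/q$, a positive integer since $q \mid c$. First I would write each residue $j \shortmod{c}$ uniquely as $j = j_0 + q t$, with $j_0$ running over a complete residue system modulo $q$ and $t$ over one modulo $m$. Because $\chi$ has conductor $q$, we have $\overline{\chi}(j) = \overline{\chi}(j_0)$, and since $e_c(al(j_0 + qt)) = e_c(alj_0)\, e_c(alqt)$ with the second factor independent of $j_0$, the sum factors as
\begin{equation*}
\sum_{j \shortmod{c}} \overline{\chi}(j)\, e_c(alj) = \Big( \sum_{j_0 \shortmod{q}} \overline{\chi}(j_0)\, e_c(alj_0) \Big) \Big( \sum_{t \shortmod{m}} e_c(alqt) \Big).
\end{equation*}
Next I would simplify the inner sum. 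Since $c = qm$, we have $e_c(alqt) = e(alt/m)$, so $\sum_{t \shortmod{m}} e_c(alqt)$ is the complete geometric sum $\sum_{t \shortmod{m}} e(alt/m)$, which equals $m$ if $al \equiv 0 \pmod{m}$ and $0$ otherwise. The hypothesis $(a,c)=1$ forces $(a,m)=1$ (as $m \mid c$), so $al \equiv 0 \pmod{m}$ is equivalent to $l \equiv 0 \pmod{m}$. The assumption $l \not\equiv 0 \pmod{m}$ therefore makes the inner sum vanish, and the whole expression is $0$.

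This argument is a direct computation rather than one with a genuine obstacle; the only point requiring care is the bookkeeping convention that $\overline{\chi}(j)$ is evaluated by reducing $j$ modulo the conductor $q$, even though the outer sum ranges over residues modulo $c$. This is precisely what permits the factorization, and I would note that the first factor $\sum_{j_0 \shortmod{q}} \overline{\chi}(j_0)\, e_c(alj_0)$ never needs to be evaluated, since the geometric sum already kills everything.
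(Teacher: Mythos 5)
Your proof is correct and is essentially identical to the paper's: the paper also writes $j = A + qB$ with $A$ modulo $q$ and $B$ modulo $c/q$, factors the sum, and observes that the geometric sum over $B$ vanishes because $(a,c)=1$ and $l \not\equiv 0 \pmod{c/q}$ force $c/q \nmid al$. No further comment is needed.
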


\begin{proof}
Let $j=A+qB$ where $A$ runs modulo $q$ and $B$ runs modulo $c/q$. Then
\begin{equation*}\sum_{j \shortmod{c}} \overline{\chi} (j) 
e_c(alj)
 = \sum_{A \shortmod{q}}\overline{\chi}(A)
 e_c(alA)
 \sum_{B \shortmod{c/q}}
 e_{c/q}(alB)
 .\end{equation*} 
Since $\frac{c}{q} \nmid al$, the sum over $B$ vanishes.
\end{proof}

\begin{mylemma}\label{kappa}
Let $\chi$ be a character of conductor $q$. Let $a,c,l \in \mathbb{Z}$ with $c \geq 1$, $c \equiv 0 \pmod{q}$, $(a,c)=1$, and $l \not\equiv 0 \pmod{\frac{c}{q}}$. Then
\begin{equation*} 
\theta_{\chi}\left(\tfrac{a}{c}+iu, l\right) =\sum_{j=1}^{c-1} \overline{\chi} (j) 
e_c(alj)
\frac{x^j-1}{1-x^c}, \qquad \text{where} \qquad x= e(iul).
\end{equation*}
\end{mylemma}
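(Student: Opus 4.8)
The plan is to insert $z=\tfrac{a}{c}+iu$ into the definition $\theta_\chi(z,l)=\sum_{k\ge 1}\overline{\chi}(k)e(klz)$ and then reorganize the resulting series by residue classes modulo $c$. Writing $e(klz)=e_c(alk)\,x^k$ with $x=e(iul)$, I would first record
\[
\theta_\chi\!\left(\tfrac{a}{c}+iu,l\right)=\sum_{k=1}^{\infty}\overline{\chi}(k)\,e_c(alk)\,x^k,
\]
a series that converges because $|x|=e^{-2\pi ul}<1$ (here $l\ge 1$, the only case relevant for the expansion \eqref{fwiththeta} of $f_{\chi_1,\chi_2}$).

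The key point is that the coefficient $\overline{\chi}(k)\,e_c(alk)$ depends only on $k$ modulo $c$: the factor $e_c(alk)$ trivially, and $\overline{\chi}(k)$ because $q\mid c$ forces $\overline{\chi}$ to be periodic modulo $c$. Grouping $k=j+cm$ with $1\le j\le c$ and $m\ge 0$, the coefficient pulls out of the inner sum and leaves the geometric series $\sum_{m\ge 0}x^{j+cm}=x^j/(1-x^c)$. This gives $\theta_\chi=\tfrac{1}{1-x^c}\sum_{j=1}^{c}\overline{\chi}(j)\,e_c(alj)\,x^j$, in which the term $j=c$ drops out since $\overline{\chi}(c)=0$.

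It then remains to reconcile this with the asserted expression, whose numerator is $x^j-1$ rather than $x^j$. The two differ precisely by $\tfrac{1}{1-x^c}\sum_{j\shortmod{c}}\overline{\chi}(j)\,e_c(alj)$, a sum that vanishes by Lemma \ref{simplifying}, whose hypotheses are identical to those assumed here. Hence $x^j$ may be replaced by $x^j-1$ at no cost, which yields the claimed identity.

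The only genuinely nontrivial step is spotting that one must subtract off this vanishing sum; the remainder is bookkeeping. I expect the motivation for the $x^j-1$ normalization to surface only later: it is tailored to the limit $u\to 0^+$ (that is, $x\to 1$) in \eqref{philimit}, where $\tfrac{x^j-1}{1-x^c}\to-\tfrac{j}{c}$ and thereby produces the Bernoulli values $B_1(j/c)$ appearing in Theorem \ref{theorem}.
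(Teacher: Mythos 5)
Your proposal is correct and follows essentially the same route as the paper: expand $\theta_\chi$ at $z=\tfrac{a}{c}+iu$, group $k$ by residue classes modulo $c$ to sum the geometric series, and then add the sum $\sum_{j\shortmod{c}}\overline{\chi}(j)e_c(alj)\cdot\frac{-1}{1-x^c}$, which vanishes by Lemma \ref{simplifying}, to convert the numerator $x^j$ into $x^j-1$. No gaps; your closing remark about the purpose of the $x^j-1$ normalization matches how the lemma is used in Corollary \ref{corollary}.
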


\begin{proof}
We have
\begin{equation*}
\theta_{\chi}\left(\tfrac{a}{c}+iu,l\right)=\sum_{k=0}^{\infty}\overline{\chi}(k)
e_c(akl) x^k.
\end{equation*}
Now let $k=j+mc$ where $0 \leq j < c$ and $m$ runs over non-negative integers. Then
\begin{equation} 
\label{simplification}
\theta_{\chi} \left(\tfrac{a}{c} + iu,l \right)
=  \sum_{j=0}^{c-1}\overline{\chi} (j)
e_c(ajl)
 x^j
 \sum_{m=0}^{\infty} x^{mc} 
=\sum_{j=1}^{c-1} \overline{\chi}(j) e_c(ajl) \frac{x^j}{1-x^c}. 
\end{equation}
Using Lemma \ref{simplifying} and adding $0=\sum_{j=1}^{c-1} \overline{\chi}(j) e_c(ajl) \frac{-1}{1-x^c}$ to \eqref{simplification} completes the proof.
\end{proof}

\begin{mycoro}\label{corollary}
Under the same assumptions as Lemma \ref{kappa},
\begin{equation*}\lim_{u \rightarrow 0^+} \theta_{\chi}\left(\tfrac{a}{c}+iu, l\right) =-\sum_{j \shortmod{c}} \overline{\chi} (j)B_1\Big(\frac{j}{c}\Big) e_c(alj).
\end{equation*}
\end{mycoro}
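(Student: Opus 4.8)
The plan is to begin from the finite-sum formula for $\theta_\chi\big(\tfrac{a}{c}+iu,l\big)$ furnished by Lemma \ref{kappa}, pass to the limit $u\to 0^+$ termwise (legitimate, since the sum runs over only finitely many $j$), and then reconcile the outcome with the claimed $B_1$-expression by invoking Lemma \ref{simplifying}. First I would record the behaviour of the parameter $x=e(iul)=\exp(-2\pi u l)$: since $l\geq 1$, as $u\to 0^+$ we have $x\to 1$ through real values in $(0,1)$. The only analytic ingredient is the elementary limit $\lim_{x\to 1}\frac{x^j-1}{1-x^c}=-\frac{j}{c}$, which drops out of the factorizations $x^j-1=(x-1)(x^{j-1}+\cdots+1)$ and $1-x^c=-(x-1)(x^{c-1}+\cdots+1)$ (or from L'H\^opital). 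Applying this in every term of Lemma \ref{kappa} yields
\[
\lim_{u\to 0^+}\theta_\chi\left(\tfrac{a}{c}+iu,l\right)=-\frac{1}{c}\sum_{j=1}^{c-1}\overline{\chi}(j)\,j\,e_c(alj).
\]

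Next I would expand the target right-hand side using the definition of $B_1$. For $1\leq j\leq c-1$ we have $j/c\in(0,1)$, hence $B_1(j/c)=\tfrac{j}{c}-\tfrac12$, while the $j\equiv 0$ term contributes nothing because $B_1(0)=0$. Consequently
\[
-\sum_{j \shortmod{c}}\overline{\chi}(j)B_1\Big(\frac{j}{c}\Big)e_c(alj)=-\frac{1}{c}\sum_{j=1}^{c-1}\overline{\chi}(j)\,j\,e_c(alj)+\frac{1}{2}\sum_{j=1}^{c-1}\overline{\chi}(j)e_c(alj).
\]
The first sum is precisely the limit computed above, so it remains only to see that the second sum vanishes. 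This is exactly the content of Lemma \ref{simplifying}: under the standing hypothesis $l\not\equiv 0\pmod{c/q}$ one has $\sum_{j \shortmod{c}}\overline{\chi}(j)e_c(alj)=0$, and since $\overline{\chi}(0)=0$ (recall $q>1$ in all applications) the $j\equiv 0$ term drops as well, leaving $\sum_{j=1}^{c-1}\overline{\chi}(j)e_c(alj)=0$. Comparing the two displays establishes the corollary.

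I do not anticipate a genuine obstacle: the statement is essentially a bookkeeping exercise threading the two preceding lemmas together. The one conceptually interesting point, worth flagging in the write-up, is the role of the constant $-\tfrac12$ inside $B_1$. The naive termwise limit produces only the linear piece $j/c$; it is precisely the vanishing sum of Lemma \ref{simplifying} that lets one add back the harmless $\tfrac12\sum_{j}\overline{\chi}(j)e_c(alj)=0$ and thereby promote $j/c$ to the full Bernoulli function $B_1(j/c)$. The only other place demanding attention is sign discipline in the limit $\frac{x^j-1}{1-x^c}\to -\frac{j}{c}$, which is what delivers the overall minus sign in the final formula.
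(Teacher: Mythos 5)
Your proposal is correct and follows essentially the same route as the paper: termwise passage to the limit in the finite sum of Lemma \ref{kappa} via $\lim_{x\to 1}\frac{x^j-1}{1-x^c}=-\frac{j}{c}$, followed by using Lemma \ref{simplifying} to absorb the constant $-\tfrac12$ and convert $j/c$ into $B_1(j/c)$. No gaps.
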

\begin{proof}
As $u$ approaches 0, $x=e(iul)$ approaches 1, and 
$\lim_{x \rightarrow 1} \frac{x^j-1}{1-x^c} = \frac{-j}{c}.$
Thus,
\begin{equation*}
\lim_{u \rightarrow 0} \theta_{\chi}\left(\tfrac{a}{c}+iu, l\right) = \sum_{j=1}^{c-1} \frac{-j}{c} \overline{\chi} (j) e_c(alj) 
=-\sum_{j \shortmod{c}} \overline{\chi} (j) \left( \tfrac{j}{c} - \left\lfloor \tfrac{j}{c} \right\rfloor - \tfrac{1}{2} + \tfrac{1}{2} \right) e_c(alj).
\end{equation*}
Note $\overline{\chi}(j)\left(\frac{j}{c} - \left\lfloor \frac{j}{c} \right\rfloor - \frac{1}{2}\right)= \overline{\chi}(j)B_1\left(\frac{j}{c}\right)$, since
$\overline{\chi}(j)=0$ when $\frac{j}{c} \in \mathbb{Z}$, so 
using Lemma \ref{simplifying} again finishes the proof.
\end{proof}

\noindent\textbf{Remark.} 
We need a definition of the generalized Bernoulli function for a (primitive) Dirichlet character $\chi$ modulo $q$, which is stated in \cite[Definition 1]{berndt1975EulerMac}. One may easily unify Berndt's formulas as
\begin{equation}
\label{eq:B1chidef}
B_{1,\chi} (x)  = \frac{- \tau (\overline{\chi})}{2 \pi i} \sum_{\substack{l \in \mathbb{Z} \\ l \neq 0}} \frac{\chi(l)}{l} e_q(l x).
\end{equation} 

\begin{proof}[$\boldsymbol{\mathbf{Proof \ of \ Theorem \ \ref{theorem}}}$]

We apply (\ref{fwiththeta}) to (\ref{philimit}). Provided that we can interchange the limits (see Lemma \ref{limit} below),
\begin{equation}
\label{limitswap}
\phi_{\chi_1,\chi_2}(\gamma)
=\lim_{u \rightarrow 0^+} \sum_{l=1}^{\infty}\frac{\chi_1(l)}{l}\theta_{\chi_2}\left(\frac{a}{c}+iu,l \right) 
= \sum_{l=1}^{\infty}\frac{\chi_1(l)}{l}\lim_{u \rightarrow 0^+}\theta_{\chi_2}\left(\frac{a}{c}+iu,l \right).
\end{equation}
Then by Corollary \ref{corollary},
\begin{equation*}
\phi_{\chi_1,\chi_2}(\gamma)=-\sum_{l=1}^{\infty}\frac{\chi_1(l)}{l} \sum_{j \shortmod{c}} \overline{\chi_2}(j) B_1\Big(\frac{j}{c}\Big) e_c(a l j). 
\end{equation*} 
Applying \eqref{avg}, we obtain 
\begin{multline*}
\phi_{\chi_1,\chi_2}(\gamma) 
= -\frac{1}{2} \sum_{l=1}^{\infty}\frac{\chi_1(l)}{l} \sum_{j \shortmod{c}} \overline{\chi_2} (j) B_1\Big(\frac{j}{c}\Big) e_c(alj)  \\
 + \frac{\chi_2(-1)}{2}\sum_{l=1}^{\infty}\frac{\chi_1(l)}{l} \sum_{j \shortmod{c}} \overline{\chi_2} (j) B_1\Big(\frac{j}{c}\Big) e_c(-alj).
\end{multline*}
Changing variables $l \rightarrow -l$ and using $\chi_1 \chi_2(-1)=1$, this simplifies as
\begin{equation*}
\phi_{\chi_1,\chi_2}(\gamma) 
= -\frac{1} {2} \sum_{j \shortmod{c}}  \overline{\chi_2} (j) B_1\Big(\frac{j}{c}\Big) \sum_{l \neq 0} \frac{\chi_1 (l)}{l} e_c(alj).
\end{equation*}
Letting $c=c'q_1$ and substituting \eqref{eq:B1chidef}, we obtain
\begin{equation}
\label{eq:phichi1chi2gammaBernoullisWithChi}
\phi_{\chi_1,\chi_2}(\gamma) = \frac{\pi i }{ \tau (\overline{\chi_1})}\sum_{j \shortmod{c}} \overline{\chi_2}(j) B_1\Big(\frac{j}{c}\Big) B_{1, \chi_1} \Big( \frac{aj}{c'} \Big). 
\end{equation}
Next we use \cite[Theorem 3.1]{berndt1975EulerMac} which states
\begin{equation}
\label{eq:B1chidecomposition}
B_{1,\chi}(x)=\sum_{n=1}^{q-1}\overline{\chi}(n)B_1\left(\frac{x+n}{q}\right).
\end{equation} 
Substituting \eqref{eq:B1chidecomposition} into \eqref{eq:phichi1chi2gammaBernoullisWithChi} completes the proof.
%
\end{proof}

\begin{mylemma} \label{limit}
The interchange of limits in \eqref{limitswap}  is justified.
\end{mylemma}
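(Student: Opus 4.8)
The plan is to prove that the series $\sum_{l=1}^\infty \frac{\chi_1(l)}{l}\theta_{\chi_2}(\tfrac ac+iu,l)$ converges \emph{uniformly} for $u\in[0,1]$, where the value at $u=0$ is read as the limit supplied by Corollary \ref{corollary}. Each summand is then continuous on $[0,1]$ (continuity at $u=0$ is exactly Corollary \ref{corollary}, and continuity for $u>0$ is clear), so the uniform limit of continuous functions is itself continuous at $u=0$. This gives $\lim_{u\to0^+}\sum_l(\cdots)=\sum_l\lim_{u\to0^+}(\cdots)$, which is precisely the interchange asserted in \eqref{limitswap}.

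By Lemma \ref{kappa} I would write $\theta_{\chi_2}(\tfrac ac+iu,l)=\sum_{j=1}^{c-1}\overline{\chi_2}(j)e_c(alj)g_j(ul)$, where $g_j(v)=\frac{e^{-2\pi vj}-1}{1-e^{-2\pi vc}}$. Elementary estimates give $|g_j(v)|\le1$ on $(0,\infty)$, $g_j(0^+)=-j/c$, $g_j$ has bounded derivative near $0$, and $g_j$ has finite total variation on $[0,\infty)$. Pulling the finite sum over $j$ outside, it suffices to prove, for each fixed $j$ with $\overline{\chi_2}(j)\ne0$, that $\sum_l \frac{\chi_1(l)e_c(alj)}{l}g_j(ul)$ converges uniformly on $[0,1]$.

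Here is the crux. Set $w_l=\chi_1(l)e_c(alj)$, a sequence of period $c$. Decomposing $l=l_0+q_1m$ (with $l_0$ modulo $q_1$ and $m$ modulo $c'=c/q_1$) exactly as in the proof of Lemma \ref{simplifying} gives $\sum_{l\shortmod{c}}w_l=c'\,[\,c'\mid j\,]\sum_{l_0\shortmod{q_1}}\chi_1(l_0)e_c(al_0j)$. Since $q_1q_2\mid c$ we have $q_2\mid c'$, so $c'\mid j$ forces $q_2\mid j$ and hence $\overline{\chi_2}(j)=0$. Thus for every $j$ with $\overline{\chi_2}(j)\ne0$ the period sum vanishes, so the partial sums $W_L=\sum_{l\le L}w_l$ are periodic in $L$, hence bounded uniformly in $L$. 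Writing $h_l(u)=g_j(ul)/l\to0$, summation by parts reduces uniform convergence of the tail to a uniform bound on $\sum_{l}|h_{l+1}(u)-h_l(u)|$ for $u\in(0,1]$; expressing $h_{l+1}-h_l$ as $g_j(u(l+1))(\tfrac1{l+1}-\tfrac1l)+\tfrac1l\big(g_j(u(l+1))-g_j(ul)\big)$ and using $|g_j|\le1$, boundedness of $g_j'$ near $0$, and the finite total variation of $g_j$, one controls this sum uniformly (the contribution of $l<1/u$ is $O(u\log(1/u))$ and that of $l\ge1/u$ is bounded by the total variation).

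The main obstacle is precisely this vanishing-average cancellation: the factors $\tfrac1l$ are not summable and $g_j(ul)\to-1$ as $l\to\infty$, so only the exact cancellation coming from $\overline{\chi_2}(j)=0$ whenever $c'\mid j$ (a consequence of $q_1q_2\mid c$) makes the $W_L$ bounded and lets Dirichlet's test apply. The remaining estimate on the variation sum $\sum_l|h_{l+1}(u)-h_l(u)|$ as $u\to0^+$ is the only genuinely analytic point, and everything else is routine bookkeeping.
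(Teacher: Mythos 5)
Your proposal is correct and follows essentially the same route as the paper: both reduce the interchange to uniform convergence in $u$, obtain the crucial boundedness of the partial sums of $\chi_1(l)e_c(alj)$ from the vanishing of the complete period sum when $\overline{\chi_2}(j)\neq 0$ (the content of Lemma \ref{simplifying}), and finish by partial summation. The only cosmetic difference is that the paper proves the termwise bound $|b_l-b_{l+1}|=O(l^{-2})$ uniformly in $u$ and applies the Weierstrass $M$-test, whereas you bound the tails of the variation sum $\sum_l|h_{l+1}(u)-h_l(u)|$ via the total variation of $g_j$; both suffice.
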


\begin{proof}
Applying Lemma \ref{kappa} to the left hand side of \eqref{limitswap}, we have
\begin{equation*}
\phi_{\chi_1,\chi_2}(\gamma) = \lim_{u \rightarrow 0^+} \sum_{l=1}^{\infty}\frac{\chi_1(l)}{l} \sum_{j=0}^{c-1}\overline{\chi_2}(j)
e_c(alj)
\frac{x^j-1}{1-x^c}.
\end{equation*}
Let $R(x)=R_{j,c}(x)=\frac{x^j-1}{1-x^c}.$ Note that $R$ is a rational function (in $x$) with no poles on $0 \leq x \leq 1$, so it is smooth on this interval. 

Let $a_l=\chi_1(l)e_c(alj)$, $b_l=\frac{1}{l}R(e^{-2 \pi lu})$, and $S(N) = \sum_{l=1}^{N} a_{l}$. By Lemma \ref{simplifying}, $\sum_{l \mymod{c}} a_l  = 0$ (since we may assume $(j,q_2) = 1$ whence $j \not \equiv 0 \pmod{c/q_1}$), so $S(N)$ is bounded (independently of $u$, of course).  Therefore, by partial summation, 
 $\sum_{l=1}^{\infty} a_l b_l =  \sum_{l=1}^{\infty} S(l) (b_l - b_{l+1})$.
We claim $|b_l - b_{l+1}| = O(l^{-2})$, with an implied constant independent of $u$.  Given this claim, the Weierstrass $M$-test shows the sum converges uniformly in $u$ which justifies the interchange of limits.

Now we show the claim.
We have
\begin{equation}
 |b_{l+1}-b_{l}|= \frac{1}{l} \Big| R(e^{-2 \pi (l+1) u})-R(e^{-2 \pi l u}) - \frac{R(e^{-2 \pi (l+1)u})}{ l+1} \Big|.
\end{equation}
Here $\frac{|R(e^{-2 \pi (l+1)u})|}{ l+1} \leq \frac{C_1}{l}$ for some constant $C_1$ independent of $l$ and $u$.
By the mean value theorem, 
\begin{equation*}
R(e^{-2 \pi l u}) - R(e^{-2 \pi (l+1)u})  =(e^{-2 \pi lu} - e^{-2 \pi(l+1)u})R'(t)
\end{equation*}
for some $t \in [0,1]$.  Since $R(t)$ is smooth on $[0,1]$, then $|R'(t)| \leq C_2$ for some constant $C_2$ independent of $l$ and $u$. Additionally, 
\begin{equation*}
e^{-2 \pi lu} - e^{-2 \pi(l+1)u}= e^{-2 \pi l u}(1-e^{-2 \pi u}) \leq \frac{C_3}{l} ul e^{-2 \pi lu} \leq \frac{C_4}{l},
\end{equation*} 
for some constants $C_3$, $C_4$,
since $xe^{-x}$ is bounded for $0 \leq x < \infty$.
Putting everything together proves the claim.
\end{proof}

\section{Proof of Theorem \ref{thm:reciprocity}}
Let $\omega = \omega_{q_1 q_2} = (\begin{smallmatrix} 0 & -1 \\ q_1 q_1 & 0 \end{smallmatrix})$ be the Fricke involution.  An easy calculation shows that if $\gamma = (\begin{smallmatrix} a & b \\ cq_1 q_2 & d \end{smallmatrix}) \in \Gamma_0(q_1 q_2)$,  then 
\begin{equation}
\label{eq:omegagammareversalformula}
 \omega \gamma = \gamma' \omega,
\end{equation}
where $\gamma' = (\begin{smallmatrix} d & -c \\ -bq_1 q_2 & a \end{smallmatrix}) \in \Gamma_0(q_1 q_2)$.  Note the map $\gamma \rightarrow \gamma'$ is an involution.
The newform Eisenstein series is a generalized eigenfunction of the Fricke involution, precisely it satisfies (see \cite[Section 9.2]{young})
\begin{equation*}
 E_{\chi_1, \chi_2}(\omega z, s) = \chi_2(-1) E_{\chi_2, \chi_1}(z,s).
\end{equation*}
For the completed Eisenstein series, using \eqref{eq:completedEisensteinSeriesDefinition} we deduce
\begin{equation*}
 E_{\chi_1, \chi_2}^*(\omega z, 1)
  =  \delta_{\chi_1, \chi_2} E_{\chi_2, \chi_1}^*(z, 1),
  \quad \text{where}
  \quad \delta_{\chi_1, \chi_2} = \chi_2(-1) \frac{\tau(\chi_1) q_2}{\tau(\chi_2)q_1}.
\end{equation*}

Define
 $\phi_{\chi_1, \chi_2}(\omega) = f_{\chi_1, \chi_2}(wz) - \delta_{\chi_1, \chi_2} f_{\chi_2, \chi_1}(z)$,
and similarly define
 \begin{equation}
 S_{\chi_1, \chi_2}(\omega) = \frac{\tau(\overline{\chi_1})}{\pi i} \phi_{\chi_1, \chi_2}(\omega).
 \end{equation}
An easy modification of the proof of Lemma \ref{phiind} shows that $\phi_{\chi_1, \chi_2}(\omega)$ is independent of $z$ (justifying the notation).  
\begin{mylemma}
\label{lemma:DedekindSumEvaluationOnFricke}
 Let $\chi_1, \chi_2$ be primitive Dirichlet characters of conductors $q_1, q_2$, respectively, with $q_1, q_2 > 1$, and $\chi_1 \chi_2(-1) = 1$.  Then 
 \begin{equation}
  S_{\chi_1, \chi_2}(\omega) = 
  \begin{cases}
  \frac{\tau(\overline{\chi_1}) \tau(\overline{\chi_2})}{(\pi i)^2} L(1,\chi_1) L(1, \chi_2), \qquad &\chi_1(-1) = \chi_2(-1) = -1, \\
  0, \qquad &\chi_1(-1) = \chi_2(-1) = 1.
  \end{cases}
 \end{equation}
\end{mylemma}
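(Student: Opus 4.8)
The plan is to exploit the $z$-independence of $\phi_{\chi_1,\chi_2}(\omega)$ (the analogue of Lemma~\ref{phiind} noted just above) and to evaluate the invariant by sending $z$ down the imaginary axis. Taking $z = iy$ with $y \to 0^+$, one has $\omega z = \tfrac{-1}{q_1 q_2\, iy} = \tfrac{i}{q_1 q_2 y} \to i\infty$, so the Fourier expansion \eqref{eq:fchi1chi2FourierExpansion} forces $f_{\chi_1,\chi_2}(\omega z) \to 0$. Thus the entire computation collapses to
\begin{equation*}
\phi_{\chi_1,\chi_2}(\omega) = -\delta_{\chi_1,\chi_2}\, \lim_{y \to 0^+} f_{\chi_2,\chi_1}(iy),
\end{equation*}
i.e.\ to the ``constant term'' of $f_{\chi_2,\chi_1}$ at the cusp $0$. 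This is the mirror image of the reduction \eqref{philimit} used to prove Theorem~\ref{theorem}, now performed at $0$ rather than at $a/c$.

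To evaluate the surviving limit I would reuse the decomposition \eqref{fwiththeta}, which (with the roles of the characters swapped) reads $f_{\chi_2,\chi_1}(iy) = \sum_{l \geq 1} \tfrac{\chi_2(l)}{l}\, \theta_{\chi_1}(iy,l)$, and interchange $\lim_{y\to 0^+}$ with the sum over $l$. For each fixed $l$, as $y \to 0^+$ the quantity $x = e(iyl) \to 1$, and grouping $k$ into residues modulo $q_1$ in $\theta_{\chi_1}(iy,l) = \sum_{k\ge 1}\overline{\chi_1}(k) x^k = \sum_{A \shortmod{q_1}} \overline{\chi_1}(A)\tfrac{x^A}{1-x^{q_1}}$ and letting $x \to 1$ (the apparent pole cancels since $\sum_A \overline{\chi_1}(A) = 0$) gives
\begin{equation*}
\lim_{y \to 0^+} \theta_{\chi_1}(iy,l) = -\tfrac{1}{q_1}\sum_{A \shortmod{q_1}} A\,\overline{\chi_1}(A) = -B_{1,\chi_1}(0),
\end{equation*}
where the last equality uses \eqref{eq:B1chidecomposition} at $x=0$. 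The crucial feature is that this limit is \emph{independent of $l$}, so it factors out of the $l$-series and yields $\lim_{y \to 0^+} f_{\chi_2,\chi_1}(iy) = -B_{1,\chi_1}(0)\, L(1,\chi_2)$.

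It then remains to evaluate $B_{1,\chi_1}(0)$, which I would read off directly from \eqref{eq:B1chidef}: the summand $\chi_1(l)/l$ is odd in $l$ when $\chi_1$ is even and even in $l$ when $\chi_1$ is odd, so $B_{1,\chi_1}(0) = 0$ in the even case and $B_{1,\chi_1}(0) = \tfrac{-\tau(\overline{\chi_1})}{\pi i} L(1,\chi_1)$ in the odd case (using $\sum_{l\neq 0}\chi_1(l)/l = 2 L(1,\chi_1)$). Since $\chi_1\chi_2(-1)=1$, the even/odd possibilities are exactly $\chi_1(-1)=\chi_2(-1)=\pm1$. The even/even case gives $S_{\chi_1,\chi_2}(\omega)=0$ immediately. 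In the odd/odd case, substituting into $S_{\chi_1,\chi_2}(\omega) = \tfrac{\tau(\overline{\chi_1})}{\pi i}\phi_{\chi_1,\chi_2}(\omega)$ together with $\delta_{\chi_1,\chi_2} = -\tfrac{\tau(\chi_1)q_2}{\tau(\chi_2)q_1}$ and the Gauss-sum relations $\tau(\chi_i)\tau(\overline{\chi_i}) = \chi_i(-1)q_i = -q_i$ collapses the factors of $q_1, q_2, \tau(\chi_1), \tau(\chi_2)$ and produces the claimed $\tfrac{\tau(\overline{\chi_1})\tau(\overline{\chi_2})}{(\pi i)^2} L(1,\chi_1)L(1,\chi_2)$.

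The only genuinely delicate step is the interchange of the limit with the $l$-summation, and I expect that to be the main obstacle. I would justify it exactly as in Lemma~\ref{limit}: partial summation is available because $\sum_{l \leq N}\chi_2(l)$ is bounded (as $\chi_2$ is nonprincipal), and one needs the uniform bound $|b_l - b_{l+1}| = O(l^{-2})$ for $b_l = \tfrac{1}{l}\theta_{\chi_1}(iy,l)$. This follows from the uniform boundedness of $\theta_{\chi_1}(iy,l)$ (its singularity at $x=1$ is removable, as seen above) and the mean value theorem, the saving factor $y e^{-2\pi l y} = O(1/l)$ again coming from the boundedness of $u e^{-u}$ on $[0,\infty)$. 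Everything past this point is routine bookkeeping with Gauss sums and the generalized Bernoulli function.
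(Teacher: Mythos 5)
Your proof is correct and uses essentially the same method as the paper: exploit the $z$-independence of $\phi_{\chi_1,\chi_2}(\omega)$, push $z$ to a cusp so one term vanishes, and evaluate the surviving constant term via the $\theta_{\chi}$ decomposition, a Lemma~\ref{limit}-type interchange, and the identification with $B_{1,\chi}(0)$ and $L(1,\cdot)$. The only (cosmetic) difference is that the paper chooses $z$ so that $\omega z = iu \to 0$, which isolates $\lim_{u\to 0^+} f_{\chi_1,\chi_2}(iu) = -L(1,\chi_1)B_{1,\chi_2}(0)$ with no extra factor, whereas you send $z = iy \to 0$ and so carry the factor $\delta_{\chi_1,\chi_2}$ through to the end, cancelling it with the Gauss-sum identity $\tau(\chi)\tau(\overline{\chi}) = \chi(-1)q$; both routes land on the same value.
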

\begin{proof}
 The ideas are similar to the proof of Theorem \ref{thm:reciprocity}, so we will be brief.  We have
 $\phi_{\chi_1,\chi_2}(\omega) = \lim_{u \rightarrow 0^{+}} f_{\chi_1, \chi_2}(iu)$.  Then following the idea of proof in Lemma \ref{kappa}, we have
 \begin{equation*}
f_{\chi_1, \chi_2}(iu) =  \sum_{\ell=1}^{\infty} \frac{\chi_1(\ell)}{\ell}
\sum_{0 \leq j < q_2} \overline{\chi_2}(j) 
\frac{x^j-1}{1-x^{q_2}}, \quad x = e(\ell i u).
 \end{equation*}
Letting $u \rightarrow 0^{+}$ (using a variant on Lemma \ref{limit} to change the limits) gives
\begin{equation*}
 \phi_{\chi_1, \chi_2}(\omega) = -
 L(1, \chi_1)
 B_{1,\chi_2}(0),
\end{equation*}
using \eqref{eq:B1chidecomposition}.  Finally, we use \eqref{eq:B1chidef} to complete the proof.
\end{proof}

Now we calculate $f_{\chi_1, \chi_2}(\omega \gamma z) - \delta_{\chi_1, \chi_2} \overline{\psi}(\gamma) f_{\chi_2, \chi_1}(z)$ in two ways.  One expression is 
\begin{equation*}
 \underbrace{f_{\chi_1, \chi_2}(\omega \gamma z) 
 -\delta_{\chi_1, \chi_2} f_{\chi_2, \chi_1}(\gamma z)}_{\phi_{\chi_1, \chi_2}(\omega)}
 + \delta_{\chi_1, \chi_2} 
 [\underbrace{f_{\chi_2, \chi_1}(\gamma z) - 
 \overline{\psi}(\gamma) f_{\chi_2, \chi_1}(z)}_{\phi_{\chi_2, \chi_1}(\gamma)}].
\end{equation*}
Alternatively, using \eqref{eq:omegagammareversalformula}, it equals
\begin{equation*}
 \underbrace{f_{\chi_1, \chi_2}(\gamma' \omega z) - \psi(\gamma') f_{\chi_1, \chi_2}(\omega z)}_{\phi_{\chi_1, \chi_2}(\gamma')} 
 +
 \overline{\psi}(\gamma)
 [\underbrace{f_{\chi_1, \chi_2}(\omega z) - \delta_{\chi_1, \chi_2} f_{\chi_1, \chi_2}(z)}_{\phi_{\chi_1, \chi_2}(\omega)}],
\end{equation*}
where we have used $\psi(\gamma') = \psi(a) = \overline{\psi}(d) = \overline{\psi}(\gamma)$.  Equating the two expressions, we derive
\begin{equation*}
 \phi_{\chi_1, \chi_2}(\gamma') - \delta_{\chi_1, \chi_2} \phi_{\chi_2, \chi_1}(\gamma) = (1 - \overline{\psi}(\gamma)) \phi_{\chi_1, \chi_2}(\omega).
\end{equation*}
Converting the notation using \eqref{eq:DedekinSumDefinition}, 
and using
$ \delta_{\chi_1, \chi_2}\frac{\tau(\overline{\chi_1})}{\tau(\overline{\chi_2})} = \chi_1(-1)$,
we derive
\begin{equation*}
 S_{\chi_1, \chi_2}(\gamma') - \chi_1(-1)  S_{\chi_2, \chi_1}(\gamma) = (1 - \overline{\psi}(\gamma)) S_{\chi_1, \chi_2}(\omega).
\end{equation*}
Using Lemma \ref{lemma:DedekindSumEvaluationOnFricke} 
and switching the roles of $\gamma$ and $\gamma'$
completes the proof of Theorem \ref{thm:reciprocity}.

%
%

\section{Remarks on the Eichler-Shimura isomorphism}
\label{section:EichlerShimura}
Let $E_{2,\chi_1, \chi_2}(z)$ be the holomorphic weight $2$ Eisenstein series attached to the primitive non-trivial characters $\chi_1, \chi_2$, defined by (using the notation \eqref{eq:Echi1chi2FourierCoefficient})
\begin{equation*}
 E_{2,\chi_1, \chi_2}(z) = 2 \sum_{n=1}^{\infty} n^{1/2} \lambda_{\chi_1, \chi_2}(n,1) q^n, \qquad q = e^{2\pi i z}.
\end{equation*}
See \cite[Section 4.6]{DiamondShurman} for more details.  The Eichler-Shimura map applied to $E_{2,\chi_1, \chi_2}$ is defined by
\begin{equation*}
\gamma \mapsto \int_{\infty}^{\gamma(\infty)} E_{2,\chi_1, \chi_2}(z) dz,
\end{equation*}
for $\gamma \in \Gamma_0(q_1 q_2)$.  By direct calculation with \eqref{eq:fchi1chi2FourierExpansion}, we have
\begin{equation*}
 \frac{d}{dz} \frac{1}{\pi i} f_{\chi_1, \chi_2}(z) = E_{2,\chi_1, \chi_2}(z).
\end{equation*}
Therefore the Eichler-Shimura map applied to $E_{2,\chi_1, \chi_2}$ is precisely $\tau(\overline{\chi_1})S_{\chi_1, \chi_2}$.

\end{document}